\numberwithin{equation}{section}
\newtheorem{theorem}{Theorem}[section]
\newtheorem{corollary}[theorem]{Corollary}
\theoremstyle{definition}
\newtheorem{remark}[theorem]{Remark}
\newtheorem*{ack}{Acknowledgement}
\newcommand{\refT}[1]{Theorem~\ref{#1}}
\newcommand{\refC}[1]{Corollary~\ref{#1}}
\newcommand{\refR}[1]{Remark~\ref{#1}}
\newcommand{\refS}[1]{Section~\ref{#1}}
\def \d{\mbox{\boldmath $d$}}
\xdef\klockan{\the\count1.0\the\count255}
\xdef\klockan{\the\count1.\the\count255}\fi
\newcommand\nopf{\qed}   
\newcommand\noqed{\renewcommand{\qed}{}} 
\newcommand\qedtag{\tag*{\qedsymbol}}
\newcommand\set[1]{\ensuremath{\{#1\}}}
\newcommand\bigpar[1]{\bigl(#1\bigr)}
\def\rompar(#1){\textup(#1\textup)}    
\newcommand\ie{i.e.\spacefactor=1000}
\newcommand\eg{e.g.\spacefactor=1000}
\newcommand\cf{cf.\spacefactor=1000}
\newcommand{\as}{a.s.\spacefactor=1000}
\newcommand\bbR{\mathbb R}
\newcommand\bbZ{\mathbb Z}
\newcommand\Z{\mathbb Z}
\newcommand\go{\omega}
\newcommand\eps{\varepsilon}
\newcommand\cE{\mathcal E}
\newcommand\cR{{\mathcal R}}
\newcommand\cV{\mathcal V}
\newcommand\hX{{\widehat X}}
\newcommand\tX{X^\#}
\newcommand\E{\operatorname{\mathbb{E}}}
\renewcommand\P{\operatorname{\mathbb{P}}}
\newcommand\e{\vec{e}}
\newcommand\resist[2]{\cR(#1\leftrightarrow#2)}
\newcommand\grad{\operatorname{\bigtriangledown}}
\renewcommand{\=}{:=} 
\providecommand\bmin{\wedge}
\newcommand\ox[1]{O(|x|^{#1})}
\newcommand\oxx[1]{O(|x|^{-#1})}
\newcommand\xx[1]{|x|^{#1}}
\newcommand\logx{\log|x|}
\newcommand\xlogx{|x|\log|x|}
\newcommand\oo{\ensuremath{[0,\infty)}}
\newcommand\hoo{h(\infty)}
\newcommand\db{\partial B_{r_0}}
\newcommand\cVz{\cV\setminus\set z}
\newcommand\Px{\P_x}
\newcommand\Ex{\E_x}
\newcommand\Po{\P_0}
\begin{document}
\title[Random walks with restarts]{Hitting times for random walks \\ with
 restarts}
\date{May 5, 2010}

\author{Svante Janson}
\address{Department of Mathematics, Uppsala University, PO Box 480,
S-751 06 Uppsala, Sweden}
\email{svante.janson@math.uu.se}
\urladdr{http://www.math.uu.se/\~{}svante/}
\author{Yuval Peres}
\address{Microsoft Research,
One Microsoft Way,
Redmond, WA 98052-6399, USA.}
\email{peres@microsoft.com}
\urladdr{http://research.microsoft.com/en-us/um/people/peres/}
\keywords{random walk, hitting time, Gittins index, harmonic function, potential kernel}
\subjclass[2000]{Primary: 60J15; Secondary: 60J10, 60J45, 60J65. }

\begin{abstract}
The time it takes a random walker in a lattice to reach
the origin from another vertex $x$, has infinite mean. If the walker
can restart the walk at $x$ at will, then the minimum expected
hitting time $\gamma(x,0)$ (minimized over restarting strategies)
is finite; it was called the ``grade'' of $x$ by
Dumitriu, Tetali and Winkler.
They showed that, in a more general setting,
 the {\em grade} (a variant of the ``Gittins index'') plays a
crucial role in control problems involving several Markov chains.
Here we establish several conjectures of Dumitriu et al
on the asymptotics of the grade in Euclidean lattices.
In particular, we show that in the planar square lattice,
$\gamma(x,0)$ is asymptotic to $2|x|^2\log|x|$ as $|x| \to \infty$. The proof hinges on
the local variance of the potential kernel $h$ being almost constant on the level sets of $h$.
We also show how the same method yields precise second order asymptotics for hitting times of a random walk
(without restarts) in a lattice disk.
\end{abstract}

\maketitle

\section{Introduction}

Consider a Markov chain $(X_n)$ on a (countable) state space $\cV$,
with transition probabilities $\bigpar{p(x,y)}_{x,y\in\cV}$.
We use $\Px$ and $\Ex$ to denote
probability and expectation in the chain with initial state $X_0=x$.

We assume that the chain is irreducible, \ie{} that
each state can be reached from any other state.

Dumitriu, Tetali and Winkler \cite{golf2} defined a function
$\gamma(x,z)$ for pairs of states $x,z\in\cV$. This function is a
version of the Gittins index and is called the \emph{grade}; it can be
defined as follows \cite[Theorem 6.1]{golf2}:

Consider a player that starts at $x$ with the goal of reaching $z$ as
quickly as possible. Each time the player moves, the state changes
randomly according to the transition matrix $p$ of the Markov chain;
however, the player then has the option (if she finds the new state to
be too bad) to restart by an instantaneous jump back to $x$.
The grade $\gamma(x,z)$ then is the minimum, over all strategies for
restarting, of the expected number of moves until $z$ is reached.
(Thus, a restart is not counted as a separate move, but the moves
already performed are included in the total count. Note that a restart
always moves back to the original starting state $x$.)
For other equivalent definitions, and applications of the grade to
other games, see \cite{golf2}.

\begin{remark}
\label{R:strategy}
Once the grade is computed for all starting positions, then the optimal
strategies for the game above (with initial state $x$) can all be described as follows:
\emph{If the current state is $y$, then restart
if $\gamma(y,z)>\gamma(x,z)$,
but not if $\gamma(y,z)<\gamma(x,z)$;
if $\gamma(y,z)=\gamma(x,z)$, then
it does not matter whether we restart or not.}
\end{remark}

\begin{remark}
The setting in \cite{golf2} is more general than the one presented
here, since that paper allows for a cost of each move that may depend
on the present state, while we consider here only the
case of constant cost, so that the total cost is the total time.
\end{remark}

The purpose of this paper is to answer questions raised by
Dumitriu et al~\cite{golf2},
 on the asymptotics of the grade in $\bbZ^d$, $d\ge2$.
(The case $d=1$ is simple; as shown in \cite{golf2},
$\gamma(x,0)=|x|(|x|+1)$ for $\bbZ$.)

By translation invariance it clearly suffices to consider $z=0$.
Denote the Euclidean norm of $x$ by $|x|$.

\begin{theorem}\label{T:Z2}
For simple random walk on $\bbZ^2$,
\begin{equation*}
\gamma(x,0)=2|x|^2\log|x|+(2\gamma_e
  +3\log 2-1)|x|^2+O(|x|\log|x|),
\qquad |x|\ge2,
\end{equation*}
where $\gamma_e\=\lim_n (\sum_{j=1}^n \frac{1}{j} -\log n)$
is Euler's constant.
\end{theorem}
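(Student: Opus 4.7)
The plan rests on the variational characterization
\[
\gamma(x, 0) = \inf_{\tau} \frac{\E_x[T \wedge \tau]}{\P_x(T \leq \tau)},
\]
in which $\tau$ ranges over stopping times with $\P_x(T \leq \tau) > 0$ and $T$ is the hitting time of $0$; this reformulation is equivalent to the restart strategy described in \refR{R:strategy}. Two classical martingales for simple random walk on $\bbZ^2$ drive the analysis: the potential kernel $h$ is harmonic off $0$, and $|X_n|^2 - n$ is a martingale. Stopping at $\tau \wedge T$ yields, for any such $\tau$,
\[
h(x) = \E_x\bigl[h(X_\tau)\ett_{\tau < T}\bigr], \qquad \E_x[T \wedge \tau] = \E_x\bigl[|X_\tau|^2 \ett_{\tau < T}\bigr] - |x|^2.
\]
These identities, combined with $h(y) = \frac{2}{\pi}\log|y| + \frac{2\gamma_e + 3\log 2}{\pi} + O(|y|^{-2})$ and its consequence $|y|^2 = \expx{\pi h(y) - (2\gamma_e + 3\log 2)}\bigl(1 + O(|y|^{-2})\bigr)$, reduce the whole problem to a one-parameter optimization.

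For the upper bound I would take $\tau$ to be the first time $h(X_n) > h(x) + s$, for a small $s > 0$. Since $h$ changes by $O(1/|y|) = O(1/|x|)$ per step near the relevant level set, the overshoot $h(X_\tau) - (h(x)+s)$ is $O(1/|x|)$. Plugging into the two identities and using the equivalence between $|y|^2$ and $\expX{\pi h(y)}$ up to a multiplicative constant yields, after simplification,
\[
\gamma_\tau(x,0) = |x|^2 \cdot \frac{(e^{\pi s} - 1) h(x)}{s} - |x|^2 + E(s, x),
\]
where $E(s,x)$ is a lattice- and overshoot-dependent error term. Since $(e^{\pi s}-1)/s = \pi + \pi^2 s/2 + O(s^2)$, the leading $s$-dependent term is minimized as $s \to 0^+$, giving $\pi|x|^2 h(x) - |x|^2$; but the discreteness prevents $s \to 0$, and balancing $\tfrac12 \pi^2 s \cdot |x|^2 h(x)$ against $E$ forces $s = \Theta(1/|x|)$, producing the upper bound $\gamma(x,0) \le \pi|x|^2 h(x) - |x|^2 + O(|x|\log|x|)$.

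For the lower bound, let $\tau$ be any stopping time and set $q = \P_x(\tau < T)$. Jensen's inequality for the convex map $t \mapsto e^{\pi t}$ applied to the conditional law of $h(X_\tau)$ given $\tau < T$ (whose mean is $h(x)/q$ by the first identity), together with $|y|^2 \ge \expx{\pi h(y) - (2\gamma_e + 3\log 2)}(1 - O(|y|^{-2}))$, gives
\[
\E_x\bigl[|X_\tau|^2 \ett_{\tau<T}\bigr] \geq q\,\expx{\pi h(x)/q - (2\gamma_e + 3\log 2)}\bigl(1 + o(1)\bigr).
\]
Hence $\gamma(x,0) \geq \bigl[q\, e^{\pi h(x)/q - C_1} - |x|^2\bigr]/(1-q)$ with $C_1 = 2\gamma_e + 3\log 2$. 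A direct check (differentiation in $q$, or expansion of $q\,e^{a/q}$ near $q = 1$) shows that this lower bound is strictly decreasing in $q \in (0,1)$ and attains its infimum $\pi|x|^2 h(x) - |x|^2$ as $q \to 1^-$.

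The main challenge will be sharpening these error estimates to the $O(|x|\log|x|)$ level. This is exactly where the observation from the abstract enters: the local variance $V(y) := \tfrac14 \sum_e (h(y+e) - h(y))^2 = 2/(\pi^2 |y|^2) + O(|y|^{-4})$ is nearly constant on the level sets of $h$, and consequently $h(X_n)^2 - \sum_{k<n} V(X_k)$ is a martingale off $0$. Its optional stopping identity supplies the second-moment information about $h(X_\tau)$ on $\{\tau<T\}$ that quantifies both the Jensen gap in the lower bound and the variance of the exit distribution in the upper bound; the approximate constancy of $V$ on level sets is what lets the two error contributions be matched to the same order. Finally, substituting the expansion of $h(x)$ into $\pi|x|^2 h(x) - |x|^2$ yields $2|x|^2 \log|x| + (2\gamma_e + 3\log 2 - 1)|x|^2 + O(1)$, which combined with the $O(|x|\log|x|)$ error gives the statement of the theorem.
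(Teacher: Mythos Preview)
Your route is genuinely different from the paper's. The paper never uses the variational formula $\gamma(x,0)=\inf_\tau \E_x[T\wedge\tau]/\P_x(T\le\tau)$; instead it builds, once and for all, a concave function $F$ on $[0,\infty)$ with $F''(s)=-2/g_\pm(s)$, where $g_\pm$ sandwich the local variance $V_h$ as a function of the level $s$. A second-order Taylor expansion then shows that $F\bigl(h(\hat X_n)\bigr)+n$ is a submartingale (resp.\ supermartingale) for \emph{every} restarting strategy, because $F$ is maximized at $h(x_0)$ and the Taylor remainder is controlled by $V_h$. Optional stopping at the hitting time of $0$ gives both bounds at once; the entire error comes from the relative gap $g_+/g_- = 1+O(e^{-s})$, which after integration yields $O(|x|\log|x|)$. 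Your upper bound via the level-set exit time is essentially the supermartingale half of this, expressed through the variational formula.

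Your lower bound, however, has a real gap. After Jensen and the conversion $|y|^2=e^{\pi h(y)-C_1}+O(1)$, you get for each $\tau$ (with $a=\pi h(x)$, $q=\P_x(\tau<T)$, $w=a(1-q)/q$)
\[
\frac{\E_x[T\wedge\tau]}{1-q}
\ \ge\ e^{a-C_1}(a-1)\;+\;e^{a-C_1}\,\frac{a\bigl(e^{w}-1-w\bigr)}{w}\;-\;\frac{O(1)}{1-q}.
\]
The first term is the target $M$; the middle term is nonnegative but only $\asymp |x|^2 a w$ for small $w$, while the last term is $\asymp a/w$. These balance at $w\asymp 1/|x|$, and for $w\ll 1/|x|$ your bound becomes $M - \Theta(a/w)$, which is unboundedly negative. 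You cannot exclude this regime: the infimum in the variational formula is approached precisely near $w\asymp 1/|x|$ (your own upper bound shows the near-optimal $\tau$ has $1-q\asymp 1/(|x|\log|x|)$), and the trivial bound $\text{ratio}\ge 1/(1-q)$ gives only $\Theta(|x|\log|x|)$ there, not $\Theta(|x|^2\log|x|)$.

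The proposed fix via the martingale $h(X_n)^2-\sum_{k<n}V(X_k)$ does not obviously close this gap. That martingale controls $\operatorname{Var}\bigl(h(X_\tau)\,\big|\,\tau<T\bigr)$, which would sharpen the Jensen inequality \emph{upward}---but Jensen is already going the right way for you. The obstruction is the additive $O(1)$ discrepancy between $|y|^2$ and $e^{\pi h(y)-C_1}$, amplified by $1/(1-q)$; second-moment information about $h(X_\tau)$ does not address that. What is needed is exactly the calibration the paper performs: a function $F$ of $h$ whose concavity matches the local variance so that the Taylor remainder contributes precisely $-1$ per step. Your sketch is circling this idea without reaching it.
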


\begin{theorem}\label{T:Zd}
For simple random walk on $\bbZ^d$, $d\ge3$,
\begin{equation*}
\gamma(x,0)=\frac{\go_d}{p_d}|x|^d+O(|x|^{d-1}),
\end{equation*}
where $\go_d=\pi^{d/2}/\Gamma(d/2+1)$ is the volume of the unit ball
in $\bbR^d$, and $p_d$ is the escape probability of the simple random
walk in $\bbZ^d$,
\ie, the probability that the walk never returns to its
starting point.
\end{theorem}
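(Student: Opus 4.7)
The plan is to use the variational characterization of the grade (a consequence of the Gittins-index theory in \cite{golf2}):
\begin{equation*}
\gamma(x, 0) = \inf_\tau \frac{\E_x[\tau \wedge T_0]}{\P_x(T_0 \leq \tau)},
\end{equation*}
where $\tau$ ranges over stopping times. For the transient walk in dimension $d\ge 3$, the main analytic tool is the Green's function $G(y,0)=\sum_{n\ge 0}\P_y(X_n=0)$, which is harmonic on $\bbZ^d\setminus\{0\}$ and satisfies $G(0,0)=1/p_d$ together with the asymptotic $G(y,0)=c_d|y|^{2-d}+O(|y|^{-d})$, where $c_d=2/\bigl((d-2)\go_d\bigr)$. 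By \refR{R:strategy}, the optimal restart region is a super-level set of $\gamma(\cdot,0)$; since we expect $\gamma(y,0)\sim \go_d|y|^d/p_d$, this region should be approximately the complement of a ball, which aligns with the level sets of $G$.

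\textbf{Upper bound.} Take $\tau$ to be the first exit time from a ball $B_R$ with $R$ slightly larger than $|x|$. Applying optional stopping to the martingale $|X_n|^2-n$ gives $\E_x[\tau\wedge T_0]\approx R^2-|x|^2$, and applying it to the harmonic $G(\cdot,0)$ gives $\P_x(T_0\le\tau)\approx p_d c_d(|x|^{2-d}-R^{2-d})$. As $R\downarrow|x|$, the quotient tends to $\go_d|x|^d/p_d$; taking $R$ the smallest lattice-feasible radius above $|x|$ and bookkeeping the lower-order corrections gives the upper bound $\go_d|x|^d/p_d+O(|x|^{d-1})$.

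\textbf{Lower bound.} Define
\begin{equation*}
F(y):=|y|^2+\go_d|x|^d\,G(y,0).
\end{equation*}
The coefficient $\go_d|x|^d$ is chosen precisely so that the leading asymptotic $|y|^2+\frac{2|x|^d}{d-2}|y|^{2-d}$ is radially minimized at $|y|=|x|$, with minimum value $d|x|^2/(d-2)$; combined with the $O(|y|^{-d})$ correction in $G$ this shows $F(y)\ge F(x)-O(1)$ for all $y\in\bbZ^d\setminus\{0\}$. Since $|y|^2$ has discrete Laplacian $1$ while $G(\cdot,0)$ is harmonic off $0$, the process $F(X_n)-n$ is a martingale killed at $T_0$, so optional stopping at $\tau\wedge T_0$ yields
\begin{equation*}
F(x)+\E_x[\tau\wedge T_0]=F(0)\,\P_x(T_0\le\tau)+\E_x\bigl[F(X_\tau)\ett[\tau<T_0]\bigr],
\end{equation*}
with $F(0)=\go_d|x|^d/p_d$. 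Bounding $F(X_\tau)\ge F(x)-O(1)$ and rearranging gives
\begin{equation*}
\frac{\E_x[\tau\wedge T_0]}{\P_x(T_0\le\tau)}\ge\frac{\go_d|x|^d}{p_d}-\frac{d|x|^2}{d-2}-\frac{O(1)}{\P_x(T_0\le\tau)}.
\end{equation*}
For $\tau$ achieving the infimum (essentially an exit time from a ball of radius $\asymp|x|$), $\P_x(T_0\le\tau)\asymp|x|^{1-d}$, so the last term is $O(|x|^{d-1})$; stopping times with much smaller hitting probability produce ratios bounded below by $1/\P_x(T_0\le\tau)$, which exceeds the claimed infimum and therefore does not compete, and this dichotomy can be made rigorous using the a priori upper bound established in the previous step.

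The principal technical obstacle is the quantitative error control: the $O(|y|^{-d})$ remainder in the Green's function expansion, and the lattice discreteness that prevents $x$ from being an exact minimizer of $F$. This is the transient-case instance of the phenomenon described in the abstract---the local variance of the potential kernel being almost constant on its level sets---and it is what produces the $O(|x|^{d-1})$ error in the final answer.
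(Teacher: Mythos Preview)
Your approach is genuinely different from the paper's. The paper proves a general estimate (\refT{T:A}) that sandwiches $\gamma(x,z)$ between two integrals $\int_0^{h(x)}2s/g_+(s)\,ds$ and $\int_0^{h^*(x)}2s/g_-(s)\,ds$, where $h$ is harmonic and $g_\pm$ bound the local variance $V_h$ on level sets of $h$. For $\bbZ^d$ ($d\ge3$) it takes $h(x)=a_d^{-1}(G(0)-G(x))$, computes $V_h(x)=\frac{(d-2)^2}{d}|x|^{2-2d}(1+O(|x|^{-1}))$, and evaluates the integrals by the substitution $u=(h(\infty)-s)^{-1/(d-2)}$. By contrast you use the variational quotient formula and the explicit martingale $F(X_n)-n$ with $F(y)=|y|^2+\lambda G(y,0)$. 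Your test function is in fact very close to the paper's $F_\pm\circ h$ (those are, to leading order, exactly of the form $|y|^2+\text{const}\cdot|y|^{2-d}+\text{const}$), so the two proofs share a common core; your version is more direct for $\bbZ^d$, while the paper's packaging via \refT{T:A} is reusable for $\bbZ^2$ and \refS{S:disk}.

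There is one real gap. In your lower bound you arrive at
\[
\frac{\E_x[\tau\wedge T_0]}{\P_x(T_0\le\tau)}\ \ge\ \frac{\omega_d|x|^d}{p_d}-O(|x|^2)-\frac{C}{\P_x(T_0\le\tau)},
\]
and then argue by dichotomy. But the dichotomy only rules out $\P_x(T_0\le\tau)\le c|x|^{-d}$ (where $1/\P$ alone beats the upper bound), whereas you need $\P_x(T_0\le\tau)\gtrsim|x|^{1-d}$ to make $C/\P=O(|x|^{d-1})$. For stopping times with $\P_x(T_0\le\tau)$ in the range $[c|x|^{-d},\,c|x|^{1-d}]$ neither branch applies; the claim that the optimal $\tau$ has $\P\asymp|x|^{1-d}$ is exactly what is at stake and cannot be assumed. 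The clean fix is to choose $\lambda=\omega_d(|x|-c)^d$ for a large enough constant $c$, so that the radial minimum of $r\mapsto r^2+\tfrac{2}{d-2}(|x|-c)^d r^{2-d}$ sits strictly below $F(x)$ and one can check $F(y)\ge F(x)$ for \emph{all} $y\ne0$ (using $G(y,0)>0$ near the origin and the asymptotic elsewhere). Then the offending $C/\P$ term disappears and the lower bound is $\lambda/p_d-F(x)=\omega_d|x|^d/p_d-O(|x|^{d-1})$ for every $\tau$.

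A smaller point on the upper bound: with $R=|x|+O(1)$ the numerator $\E_x[\tau\wedge T_0]\asymp|x|$ and the denominator $\P_x(T_0\le\tau)\asymp|x|^{1-d}$ each carry $O(1)$ relative errors from the overshoot $|X_\tau|-R\in(0,1]$. These errors cancel: writing $\bar\eps=\E_x[\,|X_\tau|-R\mid\tau<T_0\,]$ one finds both $\E_x[\tau\wedge T_0]=2|x|(R-|x|+\bar\eps)+O(1)$ and $\P_x(T_0\le\tau)=p_dc_d(d-2)|x|^{1-d}(R-|x|+\bar\eps)+O(|x|^{-d})$, so the quotient is $\omega_d|x|^d/p_d\,(1+O(|x|^{-1}))$. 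Your phrase ``bookkeeping the lower-order corrections'' hides this cancellation, which is the actual content of the step; it would be worth making it explicit.
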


The leading terms were conjectured by Dumitriu, Tetali and Winkler in the preprint version of \cite{golf2}.
Based on the heuristic argument
 that the lattice structure should be unimportant on large
scales, they suggested that a near-optimal strategy might be:
\emph{Restart if the current state has a larger Euclidean distance to
the target $0$ than the starting state}.
The expected hitting time for this strategy can then be
estimated using electrical network theory.

Theorems \ref{T:Z2} and~\ref{T:Zd} together with \refR{R:strategy}
imply the following corollary, which shows that the optimal restarting
strategy is indeed as outlined above, except possibly at some
border-line cases.

\begin{corollary}
\label{C:Z}
For simple random walk on $\bbZ^d$, with target $z=0$, there exists a
constant $C=C(d)$, independent of the starting position $x$, such that
every optimal strategy restarts from every position $y$ with
$|y|>|x|+C$, but never when $|y|<|x|-C$.
\nopf
\end{corollary}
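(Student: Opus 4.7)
The plan is to invoke \refR{R:strategy}, which states that every optimal strategy must restart from $y$ whenever $\gamma(y,0) > \gamma(x,0)$ and must not restart whenever $\gamma(y,0) < \gamma(x,0)$. Hence the corollary reduces to exhibiting a constant $C = C(d)$ such that $|y| > |x|+C$ implies $\gamma(y,0) > \gamma(x,0)$ and $|y| < |x|-C$ implies $\gamma(y,0) < \gamma(x,0)$; the two directions are symmetric, so I focus on the first.

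Write $\gamma(x,0) = F(|x|) + E(x)$, where $F$ is the explicit leading-order function from \refT{T:Z2} or \refT{T:Zd} --- namely $F(r) = 2r^2\log r + (2\gamma_e+3\log 2 - 1)r^2$ for $d=2$ and $F(r) = (\go_d/p_d)r^d$ for $d\ge 3$ --- and the remainder satisfies $|E(x)| \le K r^{d-1}$ (respectively $K r\log r$ in the $d=2$ case) for $r = |x|$ above some threshold $r_0$. A short computation shows that $F$ is strictly increasing and convex on $[r_0,\infty)$, and that $F(s) - F(r) \ge c(s-r) r^{d-1}$ (or $c(s-r)r\log r$ for $d=2$) for $r, s \ge r_0$ with $s \ge r$ and some constant $c = c(d) > 0$. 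Since $F'(r)$ and the error bound are of the same order, the leading term only beats the error once $s-r$ exceeds a fixed multiple of $K/c$.

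Assuming $|y| > |x| + C$, I split into three cases to deduce $\gamma(y,0) > \gamma(x,0)$: (i)~if $r_0 \le |x|$ and $|y| \le 2|x|$, then $F(|y|) - F(|x|) \ge cC|x|^{d-1}$ while $|E(y) - E(x)| \le K(1+2^{d-1})|x|^{d-1}$, so choosing $C > K(1+2^{d-1})/c$ makes the difference strictly positive; (ii)~if $|y| > 2|x| \ge 2r_0$, then $F(|y|) - F(|x|) \ge F(|y|)/2$ is of order $|y|^d$ and dominates the error $O(|y|^{d-1})$ once $r_0$ is chosen large enough; (iii)~if $|x| < r_0$, then $|y| > C$, and provided $C \ge r_0$, taking $C$ sufficiently large makes $F(|y|) - K|y|^{d-1}$ exceed the finite quantity $\max_{|x'| \le r_0}\gamma(x',0)$. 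A single dimension-dependent $C$ covers all three cases. The only real obstacle is the bookkeeping to ensure that one $C$ works uniformly across all $x,y$; once one observes that the leading term dominates the error on each relevant scale, the required strict inequalities fall out quickly.
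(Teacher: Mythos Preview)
Your argument is correct and follows exactly the route the paper intends: the corollary is stated with \verb|\nopf| (no proof), being an immediate consequence of \refR{R:strategy} together with the asymptotic expansions in Theorems~\ref{T:Z2} and~\ref{T:Zd}, and you have simply written out the routine verification that the leading term $F(r)$ grows fast enough relative to the error term to force $\gamma(y,0)>\gamma(x,0)$ once $|y|-|x|$ exceeds a fixed constant.
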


For intermediate cases with $|x|-C \le |y| \le |x|+C$, we
cannot prescribe explicitly the optimal startegy;
numerical calculations indicate that the simple
heuristic strategy is not always optimal, \ie{} we cannot take $C=0$
in the corollary. (Peter Winkler, personal communication).

To prove the theorems above, we state and prove in \refS{S:general}
a result, \refT{T:A}, yielding bounds on the grade for general Markov
chains.
This theorem is applied to $\bbZ^d$ in Sections \ref{S:Z2}
and~\ref{S:Zd}.
(We separate the recurrent case $d=2$ from the transient case $d\ge3$
since the details are somewhat different.)
%
In \refS{S:Bm} we present analogous results for a continuous
version of the problem,
with the random walk replaced by Brownian motion in $\bbR^d$.
In this case we obtain exact results, analogous to the asymptotic
results in Theorems \ref{T:Z2} and~\ref{T:Zd}.
In the final section we show how our method yields precise second order asymptotics for hitting times of a random walk
(without restarts) in a lattice disk.

\section{A general estimate}\label{S:general}
We state a theorem yielding upper and lower bounds on the grade.
The theorem applies in principle to any Markov chain, but its
usefulness depends on the existence of a suitable harmonic function
for the Markov chain.
Recall that a function $h:\cV\to\bbR$ is {\bf harmonic} at $x\in\cV$ if
$\Ex h(X_1) =h(x)$, \ie{} if $\sum_y p(x,y)h(y)=h(x)$.

For a function $h:\cV\to\bbR$ and $x\in\cV$, define the {\bf local variance},
\begin{equation*}
V_h(x)\=\Ex|h(X_1)-h(x)|^2=\sum_y p(x,y)|h(y)-h(x)|^2.
\end{equation*}

\begin{theorem}
\label{T:A}
Let $z\in\cV$ and suppose that $h:\cV\to[0,\infty)$ is a non-negative
function that is harmonic on $\cVz$
and satisfies $h(z)=0$.

Suppose that $g_+,g_-$ are positive functions
defined on $[0,\sup h)$,
such that for every $x,y\in\cV$
with $p(x,y)>0$, and every real number $\xi$
between $h(x)$ and $h(y)$, the local variance satisfies
\begin{equation}
\label{a}
g_-(\xi) \le V_h(x) \le g_+(\xi) \,.
\end{equation}
Then, for every $x\in\cV$,
\begin{equation}
\label{b}
\int_0^{h(x)} \frac{2s}{g_+(s)}\, \d s
\le \gamma(x,z)
\le
\int_0^{h^*(x)} \frac{2s}{g_-(s)}\,\d s \, ,
\end{equation}
where
\begin{equation*}
h^*(x)=\sup\set{h(y):p(w,y)>0 \text{ for some $w\in\cV$ with $h(w)\le h(x)$}}.
\end{equation*}
\end{theorem}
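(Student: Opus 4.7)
The plan is to derive both bounds from optional-stopping arguments applied to potentials built from $g_\pm$, together with the characterization
\[
\gamma(x,z) \;=\; \inf_{\tau\le \tau_z}\frac{\E_x[\tau]}{\P_x[X_\tau = z]},
\]
where the infimum is over stopping times bounded by the hitting time $\tau_z$ of $z$; this standard reformulation of the grade follows from the fact that an optimal Markovian restart strategy generates i.i.d.\ excursions from $x$, each of expected length $\E_x[\tau]$ and success probability $p:=\P_x[X_\tau=z]$, for an expected total of $\E_x[\tau]/p$ steps. The other ingredient used throughout is harmonicity: for $\tau\le\tau_z$, $h(X_n)$ is a martingale, so $\E_x[h(X_\tau)]=h(x)$ and, since $h(z)=0$, $\E_x[h(X_\tau)\mid X_\tau\ne z]=h(x)/(1-p)$.

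For the lower bound, I would introduce
\begin{equation*}
\psi_+(t) \;:=\; \int_0^t \frac{2(t-r)}{g_+(r)}\,dr,
\end{equation*}
which vanishes at $0$ and is convex since $\psi_+''(s)=2/g_+(s)>0$; it also satisfies the algebraic identity $\psi_+(t)+F_+(t)=2t\,S_+(t)$, where $S_+(t):=\int_0^t \frac{dr}{g_+(r)}$ and $F_+(t):=\int_0^t \frac{2r\,dr}{g_+(r)}$ is the integrand on the left of \eqref{b}. Writing $\psi_+(h(y))-\psi_+(h(x))$ via Taylor's theorem with integral remainder and summing against $p(x,y)$ (the linear term vanishes by harmonicity of $h$ at $x$), the hypothesis $g_+(\xi)\ge V_h(x)$ bounds each inner integral above by $(h(y)-h(x))^2/V_h(x)$, so the total is at most $V_h(x)/V_h(x)=1$. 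Hence $\psi_+(h(X_n))-n$ is a supermartingale on $\cVz$; optional stopping and Jensen's inequality applied conditionally on $\{X_\tau\ne z\}$ (convexity of $\psi_+$) give
\[
\E_x[\tau] \;\ge\; (1-p)\,\psi_+\!\biggl(\frac{h(x)}{1-p}\biggr) - \psi_+(h(x)),
\]
which monotonicity of $S_+$ together with $\psi_++F_+=2tS_+$ shows is at least $p\,F_+(h(x))$. Taking the infimum over $\tau$ yields $\gamma(x,z)\ge F_+(h(x))$.

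The upper bound is obtained by exhibiting the explicit strategy with stopping time $\tau^\ast := \inf\{n\ge 1 : h(X_n) > h(x)\}\wedge \tau_z$, for which $h(X_{\tau^\ast})\le h^\ast(x)$ on $\{X_{\tau^\ast}\ne z\}$ by the definition of $h^\ast(x)$. The analogue $\psi_-(t):=\int_0^t 2(t-r)/g_-(r)\,dr$ satisfies $\psi_-''=2/g_-$, and the hypothesis $g_-(\xi)\le V_h(x)$ yields the \emph{reversed} inequalities, making $\psi_-(h(X_n))-n$ a submartingale. Optional stopping, the secant bound $\psi_-(b)-\psi_-(a) \le \frac{\psi_-(h^\ast(x))-\psi_-(h(x))}{h^\ast(x)-h(x)}(b-a)$ for $a=h(x)$, $b=h(X_{\tau^\ast})\in[h(x),h^\ast(x)]$ (with $\{X_{\tau^\ast}=z\}$ handled separately as a $-\psi_-(h(x))\,p^\ast$ contribution), and the martingale identity $\E_x[(h(X_{\tau^\ast})-h(x))\mathbf{1}_{X_{\tau^\ast}\ne z}] = h(x)\,p^\ast$ combine to give
\[
\frac{\E_x[\tau^\ast]}{p^\ast} \;\le\; \frac{h(x)\,\psi_-(h^\ast(x)) - h^\ast(x)\,\psi_-(h(x))}{h^\ast(x)-h(x)},
\]
which the identity $\psi_-+F_-=2tS_-$ and monotonicity of $S_-$ reduce to $F_-(h^\ast(x))$.

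The main obstacle I anticipate is the choice of the auxiliary potentials $\psi_\pm$ in place of the stated $F_\pm$: since the hypothesis controls $V_h$ by $g_\pm$ on an entire interval rather than at a single point, one cannot work directly with $F_\pm$, whose second derivatives involve the unknown $g_\pm'$. Taking $\psi_\pm''=2/g_\pm$ is precisely what couples the variance identity $\sum_y p(x,y)(h(y)-h(x))^2 = V_h(x)$ cleanly to the Taylor expansion, and the algebraic identity $\psi_\pm+F_\pm=2tS_\pm$ then converts the resulting bounds into the form appearing in \eqref{b}. Justifying optional stopping for possibly unbounded $\tau$ is routine: restrict to $\tau$ with $\E_x[\tau]<\infty$ (others contribute $\infty$ to the ratio), truncate at $\tau\wedge N$, and pass to the limit using the nonnegativity of $\psi_\pm$ and monotone/dominated convergence.
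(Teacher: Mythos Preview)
Your argument is correct but follows a genuinely different route from the paper's. The paper never invokes the ratio formula $\gamma(x,z)=\inf_{\tau\le\tau_z}\E_x[\tau]/\P_x[X_\tau=z]$; instead it works directly with the \emph{restarted} process $(\hX_n)$ and a \emph{capped concave} potential
\[
F(s)=\int_0^{s\wedge h(x_0)}\!\!\int_t^{h(x_0)}\frac{2}{g_+(u)}\,\d u\,\d t,
\]
so that $F$ attains its maximum at $h(x_0)$. This cap is the key device: it makes $F(h(\hX_{n}))\ge F(h(\tX_{n}))$ after any restart, so $F(h(\hX_n))+n$ is a submartingale for \emph{every} restarting strategy simultaneously, and optional stopping gives $\E\tau\ge F(h(x_0))$ in one line. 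Your approach instead keeps the unrestarted chain, uses the \emph{uncapped convex} antiderivative $\psi_+$ with $\psi_+''=2/g_+$, and recovers the bound through Jensen on the event $\{X_\tau\ne z\}$ combined with the martingale identity $\E_x[h(X_\tau)\mid X_\tau\ne z]=h(x)/(1-p)$ and the algebraic relation $\psi_++F_+=2tS_+$. The two potentials are related by $F(s)+\psi_+(s)=2sS_+(h(x_0))$ on $[0,h(x_0)]$. What the paper's capping buys is robustness: everything is bounded, so no uniform-integrability issues arise for $h(X_\tau)$; in your version that identity needs the truncation $\tau\wedge N$ you mention (which does work, since for bounded stopping times the martingale identity for $h$ is exact and the ratio $\E_x[\tau\wedge N]/p_N$ converges to $\E_x[\tau]/p$). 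What your approach buys is a clean separation of concerns: one optional-stopping inequality on the free chain plus the Gittins ratio formula, with the restart mechanism absorbed entirely into that formula. The upper bounds are closer in spirit—both use the ``restart when $h$ exceeds $h(x)$'' rule—but again the paper uses a capped $F^*$ and a direct supermartingale for the restarted walk, while you use the uncapped $\psi_-$ on a single excursion together with the convex secant bound and the ratio formula.
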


\begin{proof}
Fix a starting position $x_0\in\cV$.
To prove the {\bf lower bound} in (\ref{b}),
define a function $F=F_+:\oo\to\oo$ by
\begin{equation}
\label{c0}
F(s)\=\int_0^{s\bmin h(x_0)} \int_t^{h(x_0)} \frac2{g_+(u)}\,\d u\,\d
t .
\end{equation}
Thus $F(0)=0$, and by Fubini's theorem,
\begin{equation}
\label{c}
F(h(x_0))
=\iint_{0<t<u<h(x_0)} \frac 2{g_+(u)}\, \d t \, \d u
=\int_{0}^{h(x_0)} \frac {2u}{g_+(u)}\,\d u \, .
\end{equation}
For all $s\ge0$,
\begin{equation}
\label{c1}
0\le F(s)\le F(h(x_0)).
\end{equation}
Moreover,
\begin{equation*}
F'(s)=\begin{cases}
            \int_{s}^{h(x_0)} \frac {2}{g_+(u)}\, \d u ,& s\le h(x_0),\\
                0,& s\ge h(x_0),
          \end{cases}
\end{equation*}
and, a.e.,
\begin{equation}
\label{c2}
F''(s)=\begin{cases}
            - \frac {2}{g_+(u)},&s\le h(x_0),\\
                0,& s> h(x_0).
          \end{cases}
\end{equation}

Let $\hX_n$, $n=0,1,\dots$, be the process obtained by starting at
$\hX_0=x_0$, choosing successive states by running
the Markov chain and restarting
according to some non-anticipating strategy $\Lambda$.
(Formally, $\Lambda$ is a $\{0,1\}$-valued function on finite
sequences of states.)
That is, suppose that a step of the Markov chain
takes $\hX_n$ to $\tX_{n+1}$.
If $\Lambda(\hX_1,\ldots,\hX_n,\tX_{n+1})=0$, then we let
$\hX_{n+1}=\tX_{n+1}$, while if $\Lambda(\hX_1,\ldots,\hX_n,\tX_{n+1})=1$,
then we let $\hX_{n+1}=x_0$.

We claim that
$$
Y_n \=F\bigpar{h(\hX_n)}+n
$$
is a submartingale for any choice of restarting strategy $\Lambda$.

To see this, start by observing that
$$
F\bigpar{h(\hX_{n+1})}
\ge F\bigpar{h(\tX_{n+1})} \, ,
$$
since $F$ attains its maximum at $h(x_0)$.
Hence,  denoting $\hX_n=x$, we find that
\begin{equation}
\label{yx}
\begin{split}
\E(Y_{n+1}\mid \hX_1,\dots,\hX_n)
&\ge
\E\bigpar{F(h(\tX_{n+1}))+n+1\mid \hX_1,\dots,\hX_n}\\
&=\Ex F(h(X_1))+n+1.
\end{split}
\end{equation}

Denote $Z=h(X_1)-h(x)$. A Taylor expansion of $F$
(with error in integral form), followed by an application of
\eqref{c2} and \eqref{a}, yields
\begin{equation*}
\begin{split}
F\bigpar{h(X_1)}
&= F\bigpar{h(x)+Z}\\
&= F\bigpar{h(x)}+ZF'\bigpar{h(x)}+\int_0^1(1-t)F''\bigpar{h(x)+tZ}
Z^2\, \d t\\
&\ge F\bigpar{h(x)}+ZF'\bigpar{h(x)}
  -Z^2\int_0^1(1-t)\frac{2}{g_+\bigpar{h(x)+tZ}}\,\d t\\
&\ge F\bigpar{h(x)}+ZF'\bigpar{h(x)}
  -Z^2\int_0^1(1-t)\frac{2}{V_h(x)}\, \d t\\
&\ge F\bigpar{h(x)}+ZF'\bigpar{h(x)}
  -\frac{Z^2}{V_h(x)}.
\end{split}
\end{equation*}
If $ x \ne z$, then $h$ is harmonic at $x$, so $\Ex Z=0$ and
$\Ex Z^2=V_h(x)$.  Therefore, taking
the expectation in the last displayed inequality, we find that
$$
\Ex F\bigpar{h(X_1)} \ge F\bigpar{h(x)}-1 \,.
$$
This also holds, trivially, when $x=z$. Thus by \eqref{yx},
\begin{equation*}
\E(Y_{n+1}\mid \hX_1,\dots,\hX_n)
\ge F\bigpar{h(x)}+n = Y_n,
\end{equation*}
which proves that $(Y_n)$ is a submartingale.

We stop this submartingale at
\begin{equation}
\label{t}
\tau \= \inf\set{n: \hX_n =z}.
\end{equation}
Note that $Y_\tau= F\bigpar{h(z)}+\tau =\tau$.
Moreover, by \eqref{c1},
$$
\sup_{n\le \tau} |Y_n|
=\sup_{n\le \tau} Y_n
\le  F\bigpar{h(x_0)}+\tau.
$$
Hence, if $\E \tau<\infty$, the stopped submartingale is uniformly
integrable, and thus by the optional sampling theorem
$$
\E \tau = \E Y_\tau \ge \E Y_0 = F\bigpar{h(x_0)}.
$$
This is trivially true if $\E \tau=\infty$ too.

In other words, for any restarting strategy, the expected hitting time
of $z$ by $(\hX_n)$ is at least $ F\bigpar{h(x_0)}$, \ie{}
$\gamma(x_0,z)\ge F\bigpar{h(x_0)}$,
and the left hand side of \eqref{b} follows by \eqref{c}, since $x_0$
is arbitrary.

Next, we prove the {\bf upper bound} in \eqref{b}.
We denote the initial state by $x_0$, and use the simple
restarting strategy:
\emph{Restart to $x_0$ from all points $y=\tX_n$ with $h(y)>h(x_0)$.}

Denote the resulting process by $(\hX_n)$ and observe that
$$
h(\hX_n)\le h(x_0) \mbox{ \rm and }
h(\hX_{n+1}) \le h(\tX_{n+1}) \le h^*(x_0) \mbox{ \rm for all } n.
$$
Consider
\begin{equation}
\label{c*}
F^*(s)
=F^*_-(s)
:=\int_0^{s\bmin h^*(x_0)} \int_t^{h^*(x_0)}
\frac2{g_-(u)}\, \d u\, \d t \,.
\end{equation}
By an argument similar to the one above, we find that
\begin{equation*}
\E\bigpar{F^*(h(\hX_{n+1}))\mid \hX_1,\dots,\hX_n}
\le F^*\bigpar{h(\hX_n)}-1.
\end{equation*}
Denote $Y^*_n=F^*\bigpar{h(\hX_n)}+n$ and let $\tau$ be defined by \eqref{t}.
Then $(Y^*_{n\wedge \tau})_{n \ge 0}$ is a positive supermartingale,
whence by the optional sampling theorem,
\begin{equation*}
\gamma(x_0,z) \le
\E \tau = \E Y_\tau \le \E Y_0 = F^*\bigpar{h(x_0)}
\le F^*\bigpar{h^*(x_0)}.
\qedtag
\end{equation*}
\noqed
\end{proof}

\begin{remark}\label{R:A1}
The proof above suggests that a reasonable strategy is to restart from
every state $y$ with $h(y)>h(x)$, as in the second part of the proof.
For $\bbZ^2$, $d\ge2$, with $h$ as described in Sections \ref{S:Z2}
and \ref{S:Zd}, this is close (but not identical) to the strategy
based on Euclidean distance, and \refC{C:Z} shows that it is, in some
sense, close to optimal.
\end{remark}

\begin{remark}
To obtain matching upper and lower bounds from \refT{T:A},
we want $g_-\approx g_+$.
It is thus essential that we can find a harmonic function
$h$ such that $V_h(x)$ is approximately a function of $h(x)$, \ie{}
such that $V_h(x)$ is roughly constant in sets where $h(x)$ is.
\end{remark}


\begin{remark} \label{R:A2.5}
The applications of \refT{T:A} below follow a common pattern,
here given as a heuristic guide to later precise calculations.
Suppose that $r(x)$ is a function on $\cV$ such that $h(x)\approx
\varphi(r(x))$ and $V_h(x)\approx\psi(r(x))$ for some $\varphi$ and $\psi$
with $\varphi$ increasing and differentiable. Suppose further that
$h(x)-h(y)$ is sufficiently small when $p(x,y)>0$.
We then can take $g_\pm(s)\approx\psi(\varphi ^{-1}(s))$ and obtain
\begin{equation*}
\gamma(x,z)
\approx\int_0^{\varphi(r(x))}\frac{2s}{\psi(\varphi^{-1}(s))}\, \d s
=\int_{\varphi^{-1}(0)}^{r(x)}\frac{2\varphi(t)\varphi'(t)}{\psi(t)}
\d t.
\end{equation*}
\end{remark}

%

\section{Two dimensions: Proof of \refT{T:Z2}}\label{S:Z2}
In this section the underlying Markov chain $(X_n)$ is
simple random walk on $\bbZ^2$. We choose $h(x)=\frac\pi2a(x)$,
where
$$
a(x)\= \sum_{n=0}^\infty\Big[\Po(X_n=0)-\Po(X_n=x)\Big]
$$
is the potential kernel studied in \cite{Stohr,Spitzer,Lawler,FU}.
A complete asymptotic expansion of $a(x)$ is presented in \cite{FU,KS}; here we only quote the second order expansion
given, \eg{} in \cite{Stohr,FU} and \cite[Section 1.6]{Lawler}:
\begin{equation}
\label{b1}
h(x)=\tfrac\pi2a(x)=\log|x|+b+\oxx2,
\end{equation}
where
$b=\gamma_e+\tfrac32\log2$;
moreover, if $\e$ is a unit vector along one of the coordinate axis,
then
\begin{equation*}
a(x+\e)-a(x)=\e\cdot\grad\bigpar{\tfrac2\pi\logx} + \oxx2
\end{equation*}
and thus
\begin{equation*}
V_h(x)=\tfrac12\bigl|\grad(\logx)\bigr|^2+\oxx3
=\tfrac12\xx{-2}+\oxx3.
\end{equation*}

If $p(x,y)>0$, then $|x-y|=1$ and thus by \eqref{b1}
\begin{equation}
\label{b3}
h(y)=h(x)+\oxx1=\logx+b+\oxx1.
\end{equation}
It is now easily seen that \eqref{a} is satisfied with
\begin{equation}
  \label{g+-}
g_\pm(s)=\tfrac12e^{-2(s-b)}(1\pm C e^{-s})
\end{equation}
if $C$ is a sufficiently large constant.
For small $s$ this could make $g_-(s)\le0$, but we redefine $g_-(s)$
to be a small positive constant in these cases.
We then have
\begin{equation}
  \label{ginv}
\frac1{g_\pm(s)}=2e^{2(s-b)}\bigpar{1+O(e^{-s})}
\quad \text{as }
s \to \infty \,.
\end{equation}
Furthermore, \eqref{b3} implies that
$$h^*(x)=\logx+b+\oxx1.
$$
Hence \refT{T:A} yields, for $|x|\ge2$,
\begin{align*}
\gamma(x,0)&=\int_0^{\logx+b+\oxx1}
  4se^{2(s-b)}\bigpar{1+O(e^{-s})}\,\d s\\[1ex]
&=\bigl[2se^{2(s-b)}-e^{2(s-b)}+O(se^s+e^s)\bigr]_0^{\logx+b+\oxx1} \\[1ex]
&=2(\logx+b)\xx2-\xx2+O(\xlogx).
\qedtag
\end{align*}

\section{Transient case: Proof of \refT{T:Zd}}\label{S:Zd}

For simple random walk on $\bbZ^d$, $d\ge3$, we
employ  the Green function $G(x):=G(x,0)=\sum_{n=0}^\infty\Big[\Px(X_n=0)\Big]$.
We have
\cite[Section 1.5]{Lawler}
\begin{equation*}
G(x)=a_d\xx{2-d}+\oxx{d},
\end{equation*}
where $a_d=\frac{2}{(d-2)\go_d}$,
and
\begin{equation*}
V_G(x)=\frac1d\bigl|\grad(a_d\xx{2-d})\bigr|^2+\ox{1-2d}.
\end{equation*}

Let
\begin{equation*}
h(x)\=a_d^{-1}\bigpar{G(0)-G(x)}
=a_d^{-1}G(0)-\xx{2-d}+\oxx{d}
\end{equation*}
and write $h(\infty)=a_d^{-1}G(0)$.
Thus
\begin{equation*}
h(y)=h(\infty)-\xx{2-d}+\ox{1-d},\qquad p(x,y)>0,
\end{equation*}
and
\begin{equation*}
h^*(x)=h(\infty)-\xx{2-d}+\ox{1-d}.
\end{equation*}
Moreover,
\begin{equation*}
\begin{split}
V_h(x)&=
a_d^{-2} V_G(x)
=\frac1d\bigl|\grad(\xx{2-d})\bigr|^2+\ox{1-2d}\\
&=\frac{(d-2)^2}d\xx{2-2d}\bigpar{1+\oxx1}.
\end{split}
\end{equation*}
Hence we can take, for some large constant $C$ and with a modification
for small $s$ to keep the values positive,
$$
g_\pm(s)=
\frac{(d-2)^2}d\bigpar{h(\infty)-s}^{\frac{2d-2}{d-2}}
\bigpar{1\pm C (h(\infty)-s)^{\frac{1}{d-2}}}.
$$
Consequently, \refT{T:A} yields
\begin{equation} \label{gamma1}
\gamma(x,0)=\int_0^{\widetilde{h}(x)}
  \frac{2sd}{(d-2)^2} \bigpar{h(\infty)-s}^{\frac{2-2d}{d-2}}
  \bigpar{1+ O \bigpar{h(\infty)-s}^{\frac{1}{d-2}}}\,\d s \,,
\end{equation}
where $\widetilde{h}(x)=\hoo-\xx{2-d}+\ox{1-d}$.
(Recall that $\widetilde{h}(x)$ is $h(x)$ in the lower bound
 for $\gamma(x,0)$, and $h^*(x)$ in the upper bound.)

Next, we change variables to $u=u(s):=(h(\infty)-s)^{-1/(d-2)}$.
Observe that $u(\widetilde{h}(x))=|x|+O(1)$ and $\d s=(d-2)u^{1-d} \,
\d u$. If we denote $u_0=h(\infty)^{-1/(d-2)}$, then
\begin{align*}
\gamma(x,0)&=\int_{u_0}^{|x|+O(1)} \frac{2 \bigpar{\hoo-u^{2-d}}d}{(d-2)^2} u^{2d-2}
  \bigpar{1+ O (u^{-1})}(d-2)u^{1-d}\,\d u\\
&=2\hoo\frac{d}{d-2}\int_{u_0}^{|x|+O(1)}
  \bigpar{ u^{d-1}+ O(u^{d-2})} \,\d u\\
&=\frac{2\hoo}{d-2}\xx{d}+\ox{d-1}.
\end{align*}
The result follows because $G(0)=1/p_d$ and thus
\begin{equation*}
\frac{2\hoo}{d-2}
=\frac{2G(0)}{(d-2)a_d}
=\frac{\go_d}{p_d}.
\qedtag
\end{equation*}



\section{Brownian motion}\label{S:Bm}
In this section we consider a continuous analogue of the problem
studied above.
We consider Brownian motion in $\bbR^d$, starting at some given
$x\in\bbR^d$, and again we are allowed to restart at $x$ at any given
time.
Since, when $d\ge2$, the Brownian motion \as{} never will hit $0$, we
now let our target be a small ball $B_{r_0}=\set{y:|y|\le r_0}$,
where $r_0>0$ is some arbitrary fixed number. (For $d=1$ we could take
$r_ 0=0$ too.) The grade then is defined as in the discrete case, by
taking the infimum of the expected hitting time over all restarting
strategies.

Let
\begin{equation}
\label{bh}
h(x)\=
\begin{cases}
\xx{}-r_0,& d=1,\\[.2ex]
\log \bigl( \xx{}/r_0 \bigr),& d=2,\\[.3ex]
r_0^{2-d}-\xx{2-d},&d\ge3.
\end{cases}
\end{equation}
Then $h$ is harmonic and positive in the complement of $B_{r_0}$, with
$h(x)=0$ when $\xx{}=r_0$.
Moreover,
\begin{equation}
\label{bm3}
 |\grad \!\! h(x)|^2 =
\begin{cases}
1& d=1,\\
\xx{-2},& d=2,\\
(d-2)^2\xx{2-2d},&d\ge3
\end{cases}
\end{equation}
is now exactly a function of $h(x)$, say $g(h(x))$.

Let the starting point be $x_0$ and denote the process, using some
non-anticipating restarting rule,
by $\hX_t$. Let further
$\tau\=\inf\set{t:|\hX_t|=r_0}$.
If $F$ is defined by \eqref{c0} (with $g_+=g$), we see as in
the proof of \refT{T:A}, now using It\^o's formula instead of a Taylor
expansion, that $F\bigpar{h(\hX_t)}+t$ is a local submartingale and,
again by the optional sampling theorem, that
$\E \tau \ge F\bigpar{h(x_0)}$.
Since this holds for any restarting strategy,
$$
\gamma(x_0,\db) \ge F\bigpar{h(x_0)}.
$$
Conversely, using the strategy
\emph{restart when $h(\hX_t)\ge h(x_0)+\eps$} for some $\eps>0$,
we find that if $F^*$ is defined by \eqref{c*} with
$h^*(x_0)=h(x_0)+\eps$ and $g_-=g$, then
$\E T \le F^*\bigpar{h^*(x_0)}$.
Letting $\eps\to0$, this and the lower bound above show, together with
\eqref{c}, that the grade is given by
\begin{equation}
\label{bm1}
\gamma(x_0,\db) = F\bigpar{h(x_0)} =\int_0^{h(x_0)}\frac{2s}{g(s)}\,\d
s.
\end{equation}

\begin{remark}
We see that the optimal strategy is to restart whenever the current
position is more distant from the origin than the starting point $x_0$,
which is the intuitively obvious strategy.
Some care has to be taken interpreting this, however, since this
\as{} entails infinitely many restarts in any interval $(0,\delta)$.
The resulting process can be obtained by taking a limit as in the proof
of \eqref{bm1} above, or by utilizing a reflected
Brownian motion (for the radial part).
\end{remark}

Write $h(x)=\varphi(\xx{})$
and $|\grad h(x)|^2=\psi(x)$, so that $g(s)=\psi(\varphi^{-1}(s))$,
We obtain, \cf{} \refR{R:A2.5}, that
\begin{equation*}
\gamma(x,\db)
=\int_0^{\varphi(\xx{})}\frac{2s}{\psi(\varphi^{-1}(s))}\, \d s
=\int_{r_0}^{\xx{}}\frac{2\varphi(r)\varphi'(r)}{\psi(r)}\d r.
\end{equation*}
Taking $\varphi$ and $\psi$ from \eqref{bh} and \eqref{bm3}, we easily
evaluate this integral and deduce the following result.

\begin{theorem}
\label{refT{T:Rd}}
For Brownian motion in $\bbR^d$, if $|x|\ge r_0>0$,
\begin{equation*}
\gamma(x,\db)=
\begin{cases}
(\xx{}-r_0)^2,&d=1,\\
\xx2\logx-\xx2(\tfrac12+\log r_0)+\tfrac12 r_0^2,&d=2,\\
\frac{2r_0^{2-d}}{d(d-2)}\xx d -\frac1{d-2}\xx2+\frac1d r_0^2,&d\ge3.
\end{cases}
\end{equation*}
\vskip-18pt 
\nopf
\end{theorem}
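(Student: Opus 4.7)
The plan is to take the integral formula
\begin{equation*}
\gamma(x,\db) = \int_{r_0}^{|x|} \frac{2\varphi(r)\varphi'(r)}{\psi(r)}\,\d r
\end{equation*}
derived in the two paragraphs immediately preceding the theorem (via It\^o's formula applied to $F\circ h$, the optional-sampling argument, and the limit $\eps\to 0$ in the restart threshold), and simply substitute the explicit expressions for $\varphi$ and $\psi$ read off from \eqref{bh} and \eqref{bm3}. The proof thus splits into three elementary calculus problems, one per dimensional regime; no new idea is needed beyond the machinery already set up.

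For $d=1$, $\varphi(r)=r-r_0$ and $\psi\equiv 1$, so the integrand is $2(r-r_0)$ and direct integration from $r_0$ to $|x|$ yields $(|x|-r_0)^2$. For $d=2$, $\varphi(r)=\log(r/r_0)$, $\varphi'(r)=1/r$, and $\psi(r)=r^{-2}$, so the integrand reduces to $2r\log(r/r_0)$; a single integration by parts gives the antiderivative $r^2\log(r/r_0) - r^2/2$, and evaluating at the endpoints and expanding $\log(|x|/r_0)$ produces $|x|^2\log|x|-|x|^2(\tfrac12+\log r_0)+\tfrac12 r_0^2$.

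For $d\ge 3$ I would insert $\varphi(r)=r_0^{2-d}-r^{2-d}$, $\varphi'(r)=(d-2)r^{1-d}$, and $\psi(r)=(d-2)^2 r^{2-2d}$; after cancelling one factor of $(d-2)$ and the power $r^{2-2d}$, the integrand collapses to $\tfrac{2}{d-2}\bigpar{r_0^{2-d}r^{d-1}-r}$, whose antiderivative is $\tfrac{2}{d-2}\bigpar{\tfrac{r_0^{2-d}}{d}r^d-\tfrac{r^2}{2}}$. Evaluating at $r=|x|$ and $r=r_0$ and using the identity $\tfrac{1}{d-2}-\tfrac{2}{d(d-2)}=\tfrac{1}{d}$ to consolidate the $r_0^2$-terms coming from the lower endpoint then yields the claimed $\tfrac{2r_0^{2-d}}{d(d-2)}|x|^d - \tfrac{|x|^2}{d-2}+\tfrac{r_0^2}{d}$. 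I do not expect any genuine obstacle here: the substantive content of the theorem is precisely the explicit evaluation of \eqref{bm1}, so what remains is pure calculus, and the only care needed is in bookkeeping the signs and the boundary contributions.
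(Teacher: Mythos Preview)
Your proposal is correct and follows exactly the route the paper takes: the text immediately before the theorem reduces the result to evaluating $\int_{r_0}^{|x|} 2\varphi(r)\varphi'(r)/\psi(r)\,\d r$ with $\varphi$ and $\psi$ read off from \eqref{bh} and \eqref{bm3}, and then states only that ``we easily evaluate this integral.'' Your case-by-case calculus is precisely that evaluation, and the computations in all three regimes are accurate.
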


It is instructive to compare these exact results for $\bbR^d$ and the
asymptotic results for $\bbZ^d$ in Theorems \ref{T:Z2} and~\ref{T:Zd}.
Note first that the time scales differ by a factor $d$, since in the
simple random walk, each coordinate of a step has variance $1/d$.
With this adjustment we see that we obtain the same leading term for
$\bbZ^d$ and $\bbR^d$ when $d\le2$; in this case, the choice of $r_0$
affects only lower order terms.
When $d\ge3$, however, we obtain the same $\xx d$ rate, but the
constant for Brownian motion depends on the choice of $r_0$, and there
is no reasonable way to obtain the right constant for $\bbZ^d$ from
the continuous limit.
This reflects the fact that the constant for $\bbZ^d$ involves the
escape probability $p_d$, which depends on the local lattice structure
near $0$ that is lost in the continuous limit.

\section{Hitting times for random walk in a disk}\label{S:disk}

The method above can also be used to estimate expected hitting times in
reversible Markov chains. For simplicity, we consider only simple random
walk on a graph $(\cV,\cE)$ with vertex set $\cV$.
We thus assume $p(x,y)=1/\deg_{\cV}(x)$ when $x\sim y$ (and 0 otherwise), where
$\deg_{\cV}(x)\=\set{y\in \cV : y\sim x}$.

\begin{theorem}
\label{T:No}
Let $z$, $h$, $g_+$ and $g_-$ be as in \refT{T:A},
for simple random walk on a graph $(\cV,\cE)$,
and let $D$ be a finite
connected subset of\/ $\cV$ with $z\in D$.
Define
\begin{align*}
  \partial D&:= \{ x\in D: x\sim y \text{ for some } y \notin D \} ,
\\
\partial ^2 D&:= \partial D \cup \{ x\in D: x\sim y   \text{ for some } y \in \partial D \} ,
\\
h_1 &:= \min \{ h(x): x\in \partial ^2 D \} ,
\\
 B&:=
\{ x \in D:
x \sim y  \text{ for some }  y \text{ with } h(y)\ge h_1\} .
\end{align*}

 Let $\{X_n\}_{n=0}^{\infty}$ be a
simple random walk on $D$. Let $\tau :=\min \{ n:X_n=z\}$.
Then, for any $X_0=x_0 \in D$,
\begin{equation}\label{xb}
  \int_0^{h_1} \frac{2(u \wedge h(x_0))}{g_+(u)}\, \d u
\le \E_{x_0} \tau
\le
\int_0^{h_1} \frac{2(u \wedge h(x_0))}{g_-(u)}\, \d u+\Delta ,
\end{equation}
where $ \Delta \= \E_{x_0} \# \{ n\le \tau :X_n \in B \}$.
\end{theorem}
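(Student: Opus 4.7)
The plan is to carry over the sub- and supermartingale arguments from the proof of \refT{T:A}, adapted to two differences: the process $\{X_n\}$ is the simple random walk on the subgraph $D$ rather than on $\cV$ (so the $D$-walk agrees with the $\cV$-walk only at vertices $x \in D \setminus \partial D$, where no neighbors are lost), and there is no restart mechanism. For both bounds define
\begin{equation*}
F_\pm(s) \= \int_0^{s \wedge h_1} \int_t^{h_1} \frac{2}{g_\pm(u)} \, du \, dt,
\end{equation*}
so that $F_\pm$ is concave, nondecreasing, constant on $[h_1, \infty)$, with $F_+' = F_-' \equiv 0$ on $[h_1, \infty)$, and by Fubini $F_\pm(h(x_0)) = \int_0^{h_1} 2(u \wedge h(x_0)) / g_\pm(u) \, du$, which is the integral appearing in \eqref{xb}. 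The key structural observation is that $\partial D \subseteq B$: if $x \in \partial D$ and $y \in D$ is any $D$-neighbor of $x$ (one exists since $D$ is connected with $|D| \ge 2$), then $y \sim x \in \partial D$ forces $y \in \partial^2 D$, hence $h(y) \ge h_1$, putting $x$ in $B$. Combined with the fact that $h(x) \ge h_1$ for $x \in \partial D$ (so $F_+'(h(x)) = F_-'(h(x)) = 0$ there), this localizes all failures of harmonicity of $h$ for the $D$-walk into $B$ and simultaneously kills the otherwise problematic linear term of the Taylor expansion on $\partial D$.

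\emph{Lower bound.} I would show that $Y_n \= F_+(h(X_n)) + n$, stopped at $\tau$, is a submartingale. At $x \in D \setminus \partial D$ with $x \ne z$, the $D$-walk equals the $\cV$-walk and $h$ is harmonic at $x$; the integral-form Taylor expansion, combined with (a) and the bound $F_+''(\xi) \ge -2/g_+(\xi)$ valid for all $\xi$ (since $F_+''(\xi) = 0 \ge -2/g_+(\xi)$ whenever $\xi \ge h_1$), reproduces the argument of \refT{T:A} and yields $\E_x F_+(h(X_1)) \ge F_+(h(x)) - 1$. At $x \in \partial D$, every $D$-neighbor $y$ lies in $\partial^2 D$, so $h(y) \ge h_1$ and $F_+(h(y)) = F_+(h_1) = F_+(h(x))$ deterministically; the process $Y_n$ in fact increases by exactly $1$ on such a step. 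Optional sampling, valid since $D$ is finite and hence $\E_{x_0} \tau < \infty$, then gives $\E_{x_0} \tau = \E Y_\tau \ge F_+(h(x_0))$, the lower bound in \eqref{xb}.

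\emph{Upper bound.} Set $Y_n^* \= F_-(h(X_n)) + n$ and $M_n \= \sum_{k=0}^{n-1} \ett_{X_k \in B}$, and I would show that $Y_{n \wedge \tau}^* - M_{n \wedge \tau}$ is a supermartingale. For $x \in D \setminus B$ the vertex is $D$-interior with every neighbor satisfying $h(y) < h_1$, so harmonicity of $h$ forces $h(x) < h_1$ as well; then every $\xi$ appearing in the Taylor expansion lies strictly below $h_1$, where $F_-''(\xi) = -2/g_-(\xi) \le -2/V_h(x)$ by (a), and the usual calculation produces the strict bound $\E_x F_-(h(X_1)) \le F_-(h(x)) - 1$. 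For $x \in B$ the quadratic control can fail---a neighbor may exceed $h_1$, or $x$ may lie in $\partial D$---but concavity of $F_-$ alone yields $\E_x F_-(h(X_1)) \le F_-(h(x)) + F_-'(h(x)) \E_x Z$, and the last term vanishes either because $\E_x Z = 0$ (for interior $x \in B \setminus \partial D$) or because $F_-'(h(x)) = 0$ (for $x \in \partial D$). Hence the drift of $Y_n^*$ exceeds $0$ by at most $1$ on $B$, which is precisely the amount absorbed by the increment of $M_n$, and optional sampling delivers $\E_{x_0} \tau \le F_-(h(x_0)) + \E M_\tau \le F_-(h(x_0)) + \Delta$. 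The main obstacle is exactly that $h$ is not harmonic for the walk on $D$ at $\partial D$ and that (a) can be violated on $B$; the role of $\partial^2 D$, $h_1$, and the correction $\Delta$ is precisely to show that these pathologies contribute at most one unit of drift per visit, and therefore can be absorbed into the bound.
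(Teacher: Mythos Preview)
Your proof is correct and follows essentially the same route as the paper's: the same functions $F_\pm$, the same submartingale $F_+(h(X_n))+n$ for the lower bound with the observation that steps from $\partial D$ stay in $\partial^2 D$ so $F_+$ is unchanged, and the same supermartingale $F_-(h(X_n))+n-\#\{k<n:X_k\in B\}$ for the upper bound with the three-case split $D\setminus B$, $B\setminus\partial D$, $\partial D$. The only cosmetic differences are that you make the inclusion $\partial D\subseteq B$ explicit (the paper uses it implicitly in its case split) and that on $\partial D$ you invoke concavity plus $F_-'(h(x))=0$ to get $F_-(h(X_1))\le F_-(h(x))$, whereas the paper observes the equality $F_-(h(X_1))=F_-(h(x))$ directly since both arguments are $\ge h_1$.
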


Note that $h$ is harmonic on all of $\cV$, while $X_n$ is defined on $D$
with transition probabilities $p_D(x,y)\=1/\deg_D(x)$ when $x\sim y$ and
$x,y\in D$.

The error term $\Delta$ can be estimated in several ways. One of them is to bound $\tau=\tau_z$ by the time $\tau_*$ that it takes the random walk to visit $z$ and return to $x_0$.
Then (see, e.g., Lemma 10.5 and Proposition 10.6 in \cite{LPW})
\begin{equation}
\label{Delta}
\Delta \le \E_{x_0} \# \{ n \le \tau_* : X_n \in B \}
= \frac {\mu (B)}{\mu (D)} \E_{x_0}(\tau_*)
=\mu(B)\resist{x_0}z,
\end{equation}
where $\mu(B)= \sum_{x \in B} \deg_D (x)$ and $\resist {x_0}z$ is the
resistance
between $x_0$ and $z$ in $D$, regarded as an electrical network.

\begin{proof}

We define, in analogy with \eqref{c0} and \eqref{c*},
\begin{equation}
  \label{F+-}
F_{\pm}(s):=\int_0^{s \wedge h_1} \int_t^{h_1} \frac{2}{g_{\pm}(u)}
 \d u \d t = \int_0^{h_1}\frac{2(s \wedge u)}{g_{\pm}(u)} \d u.
\end{equation}
We thus integrate only up to $h_1$, and we may redefine $g_+(u)=\infty$ for
$u > h_1$. The right hand inequality in \eqref{a} then still holds for all
$x\in \cV$, and we obtain from $x\in D \backslash \partial D$, exactly as
in \refS{S:general},
\begin{equation}
  \label{xa}
\E (F_+ (h(X_{n+1})) | X_n=x) = \E_x F_+ (h(X_1)) \ge F_+(h(X))-1.
\end{equation}
On the other hand, if $X_n =x \in \partial D$, then $X_{n+1} \in \partial^2
D$, and thus
$h(X_n)$, $h(X_{n+1})\ge h_1$ and $F_+(h(X_{n+1}))=F_+(h(X_n))$, so
\eqref{xa}
holds in this case too. Consequently, $Y_n := F_+(h(X_n))+n$ is a
submartingale, and as in \refS{S:general}
$$\E_{x_0}\tau=\E_{x_0} Y_{\tau} \ge \E_{x_0} Y_0 = F_+ (h(x_0)),$$
which is the left hand inequality of \eqref{xb}.

For an upper bound, we assume that $x\neq z$.
The argument in \refS{S:general}  works for $x\in D \backslash B$, and we obtain
\begin{equation}
  \label{xc}
\E_x F_-(h(X_1)) \le F_-(h(x))-1 ,
\qquad x\in D\backslash B.
\end{equation}
For $x\in B\backslash \partial D$, the same argument yields only, using $F_-^{''}\le 0$,
\begin{equation}
  \label{xd}
\E_x F_- (h(X_1)) \le F_-(h(x)),
\qquad x\in B\backslash \partial D.
\end{equation}
Finally, if $x\in \partial D$, then $h(x),h(X_1)\ge h_1$ for every $X_1 \sim x$, and
\begin{equation}
  \label{xe}
F_-(h(X_1))=F_-(h(x)), \qquad x\in \partial D .
\end{equation}

We define $N_n :=\# \{ k<n: X_k \in B\}$ and find from
\eqref{xc}--\eqref{xe} that  if
$Y_n^{*}:=F_-(h(X_n))+n-N_n$,
then $Y^*_{n\wedge\tau}$, $n\ge 0$, is a supermartingale and thus
$$\E_{x_0}\tau-\E_{x_0} N_{\tau}=\E_{x_0} Y_{\tau}^*\le Y_0^*=F_-(h(x_0)),$$
which completes the proof of \eqref{xb}.
\end{proof}

\subsection{Application}

Take $\cV=\bbZ^2$ with edges between vertices at distance 1.
Consider simple random walk on    the disk
 $D=\{ x\in \Z^2: |x|\le R\}$. Let $z=0 \in D$ and
take $h$ and $g_\pm$ as in \eqref{b1} and \eqref{g+-}.
Then by \eqref{ginv} and \eqref{F+-},
\begin{equation*}
\begin{split}
F_{\pm}(x_0)
&= \int_0^{h_1}\frac{2(u \wedge h(x_0))}{g_{\pm}(u)}\d u\\
&= \int_0^{h_1}4(u \wedge h(x_0))(e^{2(u-b)} \pm O(e^u))\d u\\
&=\bigl[2(u \wedge h(x_0)) e^{2(u-b)}\bigr]_0^{h_1}
 - \int_0 ^{h(x_0)\wedge h_1} 2e^{2(u-b)}
 +O\Bigl(\int_0^{h_1}h(x_0)e^u \d u\Bigr)\\
&=2(h(x_0) \wedge h_1) e^{2(h_1-b)} - e^{2(h(x_0)\wedge h_1-b)}
  +1 + O(h(x_0)e^{h_1}),
\end{split}
\end{equation*}
where $b=\gamma_e+\tfrac32\log2$.
By \eqref{b1},
\begin{align*}
  h_1&=\log (R+O(1))+b+O(R^{-2})=\log R +b+O(R^{-1}),
\\
h(x_0)&=\log |x_0| +b + O(|x_0|^{-2}).
\end{align*}
Thus
\begin{equation*}
\begin{split}
F_{\pm}(x_0)
&= 2\bigpar{h(x_0)+O(R^{-1})}e^{2\log R +O(R^{-1})}
 - e^{2 \log |x_0| +O(|x_0|^{-2}+R^{-1})} +1 +O(R\log |x_0|)\\
&=2R^2h(x_0)+O(R\log R)-|x_0|^2.
\end{split}
\end{equation*}
Further, it is easily seen that $x\in B$ implies $|x|=R-O(1)$, and thus
$\mu(B)=O(R)$. Also, it is easy to see that for $x_0$ in $D$ we have
$$
\resist{x_0}{0}=O(\log R) \, .
$$
(This follows, e.g., from the method of random paths~\cite{per} by picking a uniform point $u$ on the chord bisecting the segment $x_0z$, and
considering the lattice path closest to the union of the segments $x_0u$ and $uz$.)
 Thus \eqref{Delta} yields
$$
\Delta=O(\E_{x_0}\tau_*/R)=O(R \cdot \resist{x_0}{0})=O(R\log R).
$$
Consequently, \refT{T:No} yields for the hitting time $\tau$ of the origin, that
$$
\E_{x_0}\tau=2R^2h(x_0)+O(R\log R)-|x_0|^2.
$$
For $|x_0|\ge R^{1/2}$ say, we can write this as
$$
\E_{x_0}\tau=2R^2 \log|x_0| +2bR^2-|x_0|^2+O(R\log R).$$

\begin{ack}
This research was mostly performed during the 3rd IES workshop in Harsa and
at Institut Mittag-Leffler, Djursholm, Sweden, August 2001.
We thank Peter Winkler and other participants for
helpful discussions.
The work was completed during a visit by SJ to Microsoft, Redmond, USA, May
2010.
\end{ack}

\end{document}